\tikzset{node distance=2cm, auto}
\newcommand{\scr}{\mathscr}
\newcommand{\M}{\scr{M}}
\newcommand{\MO}{\scr{M}_1}
\newcommand{\MT}{\scr{M}_2}
\newcommand{\FO}{\scr{F}_1}
\newcommand{\FT}{\scr{F}_2}
\newcommand{\WO}{\scr{W}_1}
\newcommand{\CO}{\scr{C}_1}
\newcommand{\WT}{\scr{W}_2}
\newcommand{\CT}{\scr{C}_2}
\newcommand{\IO}{\scr{I}_1}
\newcommand{\JO}{\scr{J}_1}
\newcommand{\JT}{\scr{J}_2}
\newcommand{\IT}{\scr{I}_2}
\newcommand{\I}{\scr{I}}
\newcommand{\J}{\scr{J}}
\newcommand{\C}{\scr{C}}
\newcommand{\W}{\scr{W}}
\newcommand{\F}{\scr{F}}
\newcommand{\lift}{\cancel{\square}}
\newcommand{\bo}[1]{\mathbf{#1}}
\newcommand{\spaces}{\bo{Spaces}}
\def\defeq{\mathrel{\mathop:}=}
\def\presuper#1#2%
\begin{document}

\title{Right Delocalization of Model Categories}
\author{Bruce R Corrigan-Salter}
\eaddress{brcs@wayne.edu}
\address{Department of Mathematics\\ Wayne State University\\
1150 Faculty/Administration Building\\
656 W. Kirby 
\\Detroit, MI~48202, USA}

%%%%%%%%%%%%%%%%%%%%%%%%%%%%
%   End Opening
%%%%%%%%%%%%%%%%%%%%%%%%%%%%
\keywords{localization, delocalization, model categories, diagram} 

\amsclass{18D99, 18E35, 55P60, 18G55}
\maketitle
\begin{abstract}
Model categories have long been a useful tool in homotopy theory, allowing many generalizations of results in topological spaces to other categories.  Giving a localization of a model category provides an additional model category structure on the same base category, which alters what objects are being considered equivalent by increasing the class of weak equivalences.  In some situations, a model category where the class of weak equivalences is restricted from the original one could be more desirable.  In this situation we need the notion of a delocalization.  In this paper, right Bousfield delocalization is defined, we provide examples of right Bousfield delocalization as well as an existence theorem.  In particular, we show that given two model category structures $\MO$ and $\MT$ we can define an additional model category structure $\MO \cap \MT$ by defining the class of weak equivalences to be the intersection of the $\MO$ and $\MT$ weak equivalences.  In addition we consider the model category on diagram categories over a base category (which is endowed with a model category structure) and show that delocalization is often preserved by the diagram model category structure.   
\end{abstract}

\section{\bf Introduction}
Since being introduced in \cite{quillen} by Quillen, model categories have long served as a means to work on homotopy theory.  Given a category, Quillen provides a list of necessary attributes on the category and three classes of maps, that allows one to give a decent description of the homotopy category obtained by formally inverting the class of weak equivalences.  This work was paramount in extending results found in topological spaces to other categories.

After their introduction, much has been done in the study of model categories and their associated homotopy categories.  One particular instance of work that has been done is the localization of model categories found in the work of Bousfield \cite{bousfieldI},\cite{bousfieldII}, \cite{bousfieldIII}, \cite{bousfieldIV} and \cite{bousfieldV} and presented in detail by Hirschhorn in \cite{hirschhorn} (see \ref{LOCDEF}).  By localizing a model category, the class of weak equivalences can be extended, allowing one to have a larger variety of "equivalent" objects. 

In this paper we consider the opposite of a localization, mainly a delocalization of a model category. By delocalizing we instead provide a model category with a smaller class of weak equivalences giving us the ability to expect extra structure on our class of weak equivalences.  In particular, we define the notion of right Bousfield delocalization and show that such a delocalization of a pair of model categories $\MO$ and $\MT$ exists when certain assumptions on $\MO$ and $\MT$ can be made.    

\subsection{Original Motivation}
In \cite{corrigansalterII} the author defines higher order Hochschild cohomology of a $k$-algebra $A,$ over a simplicial set $X_{\bullet}$ with coefficients in $M$ (denoted $H_{X_{\bullet}}^{\ast}(A,M)$).  The main result in this paper is determining what actions coefficient modules can take.  One question that can be asked about this higher order Hochschild cohomology is whether the cohomology is determined up to homotopy equivalence.  The problem is that in general, given two simplicial sets, the allowable coefficient modules may not be able to have the same amounts of actions (for example one simplicial set may only be able to work with symmetric bi-modules, while the other could work with bi-modules).  

One way to try and attack this problem is by considering an altered model category structure on the category of simplicial sets where weak equivalences are maps which are both weak equivalences and which preserve the "allowable modules" between the simplicial sets.  From a localization standpoint, this idea is somewhat backwards.  When localizing a model category, we get a larger class of weak equivalences, but here we are trying to restrict our class of weak equivalences.  By considering a "delocalization" we may find what we are looking for.  

\subsection{Organization}
We start in Section \ref{MODEL} by providing formal definitions for model categories, Bousfield localization and weak factorization systems.  We also use this section to set up the notation used in the paper.  In Section \ref{BOUS} we define and give examples of a Bousfield delocalization as well as an existence theorem.  In Section \ref{PROOF} the main delocalization theorem is proved and in Section \ref{DIAGRAM} we consider diagram categories and their delocalizations.  Lastly, we revisit the original motivation by looking back at the homotopy invariance question for higher order Hochschild cohomology in Section \ref{HOCH}.     

\section{\bf Model Categories}
\label{MODEL}
In this paper we assume some familiarity with model categories and their localizations, but we use this section to provide a formal definition for both along with the notation used within this paper.  Much of what is provided here will be used throughout the proofs later on.  To define a model category we make use of the following closely related concept introduced by Joyal and Tierney in \cite{joytier} and studied by Riehl in \cite{riehl}.
\begin{definition}\cite[2]{riehl}
  A \textbf{\textit{weak factorization system}}  $(L,R)$ on a category $\M$ consists of two classes of maps so that 
\begin{itemize}
\item Any map $f\in \M$ can be factored as $f=r\circ l$ with $l \in L$ and $r\in R.$
\item Any \textbf{\textit{lifting problem}}, i.e., any commutative diagram of the form

\begin{equation*}
\begin{tikzpicture}[baseline=(current bounding box.center)]
\matrix (m) 
[matrix of math nodes, row sep=3em, column sep=3em, text height=1.5ex, text depth=0.25ex]
{
\cdot & \cdot
\\
\cdot & \cdot \\ 
};
\path[->, thick, font=\scriptsize]
(m-1-1)
edge  (m-1-2)
edge node[anchor=east] {$l\in L$} (m-2-1)
(m-1-2)
edge node[anchor=west] {$r \in R$} (m-2-2)
(m-2-1) 
edge  (m-2-2);
\path[dashed,->] (m-2-1) edge (m-1-2)
; 
\end{tikzpicture}
\end{equation*}

has a \textbf{\textit{solution}}, i.e., a diagonal arrow making both triangles commute. 
\item The classes $L$ and $R$ are closed under retracts.
\end{itemize}
\end{definition}

It can be seen that by the retract argument $L$ and $R$ actually determine each other; $L$ is the class of maps with the left lifting property with respect to $R$ and $R$ is the class of maps with the right lifting property with respect to $L.$  For this reason, we will commonly denote classes of maps that lift in this manner as $R=L^{\lift}$ and $L=\presuper{\lift}{R}.$

With this, we give the definition of a model category.

\begin{definition}
A \textbf{\textit{model category structure}} on a category $\M$ consists of three classes of maps $\F, \C$ and $\W$ called fibrations, cofibrations and weak equivalences respectively so that $\W$ satisfies the 2 out of 3 property, and $(\C \cap \W, \F)$ and $(\C, \F \cap \W)$ are weak factorization systems.
\end{definition}

\subsection{Cofibrantly Generated Model Categories}
Many times a model category structure can be completely determined by a set of cofibrations and set of acyclic cofibrations (where the class of acyclic cofibrations is $\C \cap \W$).  When this happens we say that the model category is cofibrantly generated.  

\begin{definition}
\cite[2.1.17]{hovey}
Suppose $\M$ is a model category.  We say that $\M$ is \textbf{\textit{cofibrantly generated}} if there are sets $\I$ AND $\J$ of maps such that:
\begin{itemize}
\item[i)] the domains of the maps in $\I$ are small relative to $\I$-cell (transfinite compositions of pushouts of elements in $\I$)
\item[ii)] the domains of the maps in $\J$ are small relative to $\J$-cell (transfinite compositions of pushouts of elements in $\J$)
\item[iii)] the class of fibrations is $\J^{\lift}$ 
\item[iv)] the class of acyclic fibrations is $\I^{\lift}$ 
\end{itemize}
The sets of maps $\I$ and $\J$ are referred to as the sets of \textbf{\textit{generating cofibrations}} and \textbf{\textit{generating acyclic cofibrations}} respectively.
\end{definition}

\begin{remark}
In general, cofibrantly generated model categories only need the domains of $\I$ and $\J$ to be small relative to $\I$-cell and $\J$-cell respectively, however for this paper we typically assume a stronger notion of small, by expecting that the domains of $\I$ and $\J$ are small relative to the category $\M$ (see \cite[7.14]{dwyerspalinski}).
\end{remark}

Detecting whether a model category is cofibrantly generated can be done via the following theorem due to D.M. Kan.

\begin{theorem}
\cite[2.1.19]{hovey}
\label{HOVEY}
Suppose $\M$ is a category with all small colimits and limits. Suppose $\W$ is a class of maps of $\M,$ and $\I$ and $\J$ are sets of maps of $\M.$  Then there is a cofibrantly generated model category structure on $\M$ with $\I$ as the set of generating cofibrations, $\J$ as the set of generating acyclic cofibrations, and $\W$ as the class of weak equivalences if and only if the following conditions are satisfied.
\begin{itemize}
\item[i)] $\W$ has the 2 out of 3 property and is closed under retracts
\item[ii)] the domains of $\I$ are small relative to $\I$-cell
\item[iii)] the domains of $\J$ are small relative to $\J$-cell
\item[iv)] $\J$-cell$\subseteq \W \cap \presuper{\lift}{(\I^{\lift})}$ 
\item[v)] $\I^{\lift}\subseteq \W \cap \J^{\lift}$
\item[vi)] either $\W \cap \presuper{\lift}{(\I^{\lift})} \subseteq \presuper{\lift}{(\J^{\lift})}$ or $\W\cap \J^{\lift}\subseteq\I^{\lift}$.
\end{itemize}
\end{theorem}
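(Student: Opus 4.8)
The plan is to treat the two implications separately, with essentially all the work in the ``if'' direction. For the ``only if'' direction I would simply read off each numbered condition from the model axioms: condition (i) is the requirement on $\W$ built into any model structure; conditions (ii)--(iii) are the smallness clauses in the definition of cofibrantly generated; and conditions (iv)--(vi) record the fact that in the resulting structure the cofibrations are $\presuper{\lift}{(\I^{\lift})}$, the fibrations are $\J^{\lift}$, the acyclic fibrations are exactly $\I^{\lift}$, and the acyclic cofibrations are exactly $\presuper{\lift}{(\J^{\lift})}$; each containment then follows because the acyclic (co)fibrations are closed under the relevant pushouts, transfinite compositions and retracts. This direction is routine and I would keep it brief.

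For the ``if'' direction I would define the three classes by $\C = \presuper{\lift}{(\I^{\lift})}$, $\F = \J^{\lift}$, and the weak equivalences to be $\W$, and then verify the two weak factorization systems $(\C \cap \W, \F)$ and $(\C, \F \cap \W)$. Since $\M$ is bicomplete and the domains of $\I$ and $\J$ are small (conditions (ii)--(iii)), the small object argument applies to each set and produces functorial factorizations realizing the weak factorization systems $(\presuper{\lift}{(\I^{\lift})}, \I^{\lift})$ and $(\presuper{\lift}{(\J^{\lift})}, \J^{\lift})$. These are precisely the candidate factorizations I need, so the whole problem reduces to the two identifications
\[
\I^{\lift} = \F \cap \W \qquad\text{and}\qquad \presuper{\lift}{(\J^{\lift})} = \C \cap \W .
\]

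For the ``easy'' inclusions I would invoke conditions (iv)--(v). Condition (v) gives $\I^{\lift} \subseteq \W \cap \J^{\lift} = \W \cap \F$ immediately. For the other, since the small object argument exhibits every map of $\presuper{\lift}{(\J^{\lift})}$ as a retract of a map in $\J$-cell, condition (iv) together with the retract-closure of $\W$ (condition (i)) and the retract-closure of $\C$ (it is a left class) gives $\presuper{\lift}{(\J^{\lift})} \subseteq \W \cap \C$. The remaining reverse inclusions are where condition (vi) enters. Assuming its second alternative $\W \cap \J^{\lift} \subseteq \I^{\lift}$, the first identification is immediate, and I would then deduce the second by a retract argument: given $g \in \C \cap \W$, factor $g = p \circ i$ with $i \in \presuper{\lift}{(\J^{\lift})}$ and $p \in \J^{\lift}$, note $i \in \W$ by the easy inclusion, apply 2-out-of-3 to get $p \in \W$, conclude $p \in \W \cap \J^{\lift} = \I^{\lift}$, observe that $g$ has the left lifting property against $p$, and use the retract argument to present $g$ as a retract of $i \in \presuper{\lift}{(\J^{\lift})}$. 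The first alternative of (vi) is handled by the dual argument, starting from the second identification and deducing the first.

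I expect the main obstacle to be exactly this last step: verifying that either alternative of condition (vi), on its own, forces both identifications, and keeping the retract argument correctly oriented (which map is exhibited as a retract of which, and that the class absorbing the retract is the one that is genuinely retract-closed). Once both identifications are in hand, the two weak factorization systems are established, $\W$ already satisfies 2-out-of-3 and retract-closure by (i), and the model category axioms hold; cofibrant generation is then automatic, since by construction $\I$ generates the cofibrations and acyclic fibrations and $\J$ generates the fibrations and acyclic cofibrations.
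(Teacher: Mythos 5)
This theorem is quoted from \cite[2.1.19]{hovey} and the paper supplies no proof of its own, so the only meaningful comparison is with the cited source. Your outline is correct and is essentially the standard (Kan/Hovey) argument: the small object argument on $\I$ and $\J$ produces the two candidate factorizations, conditions (iv)--(v) together with retract closure give the easy inclusions $\presuper{\lift}{(\J^{\lift})}\subseteq \C\cap\W$ and $\I^{\lift}\subseteq\F\cap\W$, and either alternative of (vi) forces both identifications via exactly the retract argument you describe (factor, apply 2-out-of-3, lift, exhibit the given map as a retract of the left factor).
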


\subsection{Bousfield Localization}
In this paper we use the notion of Bousfield localization extensively, so the definition will be provided here.  

\begin{definition}
\cite[3.3.1]{hirschhorn}
\label{LOCDEF}
Let $\M$ be a model category and $S$ be a class of maps in $\M.$
\begin{itemize}
\item[(1)] The \textbf{\textit{left Bousfield localization}} of $\M$ with respect to $S$ (if it exists) is a model category structure $L_S\M$ on the underlying category $\M$ such that
\begin{itemize}
\item[a)] the class of weak equivalences of $L_S\M$ equals the class of $S$-local equivalences of $\M$
\item[b)] the class of cofibrations of $L_S\M$ equals the class of cofibrations of $\M$ and
\item[c)] the class of fibrations of $L_S\M$ is the class of maps with the right lifting property with respect to maps which are both $S$-local equivalences and cofibrations.
\end{itemize} 
\item[(2)] The \textbf{\textit{right Bousfield localization}} of $\M$ with respect to $S$ (if it exists) is a model category structure $R_S\M$ on the underlying category $\M$ such that
\begin{itemize}
\item[a)] the class of weak equivalences of $R_S\M$ equals the class of $S$-colocal equivalences of $\M$
\item[b)] the class of fibrations of $R_S\M$ equals the class of fibrations of $\M$ and
\item[c)] the class of cofibrations of $R_S\M$ is the class of maps with the left lifting property with respect to maps which are both $S$-colocal equivalences and fibrations.
\end{itemize}
\end{itemize}
\end{definition}

\begin{remark} An alternate definition for Bousfield localizations is given in Section \ref{BOUS} so we refer the reader to \cite{hirschhorn} for more information on the definition provided above.
\end{remark}

\section{\bf Right Bousfield Delocalization}
\label{BOUS}
In this section we define right Bousfield delocalization, provide an existence theorem, as well as an example. First we start with an alternative definition of right Bousfield
localization.

\begin{definition} 
Given a category $\M$ with a model category structure which we denote
$\MO$ a right Bousfield localization of $\M$ is an additional model category structure on $\M$
(denoted $\MT$) with the following properties:
\begin{itemize}
\item if $f$ is a weak equivalence in the $\MO$ model category structure, then $f$ is a weak equivalence in the $\MT$ model category structure
\item $\MO$ and $\MT$ have the same class of fibrations.
\end{itemize}
\end{definition}

\begin{remark}
This definition agrees with \ref{LOCDEF} where $\MT$ is the right Bousfield localization of $\MO$ with respect to the class of $\MT$ weak equivalences.
\end{remark}

With this, the definition of a right Bousfield delocalization is quite natural.

\begin{definition}
Given a category $\M$ with a model category structure, which we denote $\MT,$ a \emph{right Bousfield delocalization} of $\MT$ is an additional model category structure on $\M$ (denoted $\MO$) with the property that $\MT$ is a right Bousfield localization of $\MO.$
\end{definition}

Given these definitions, recognizing a localization/delocalization relationship between
two model category structures on the same category $\M$ is fairly straight forward, but
providing the existence is more difficult. When starting with a model category structure
$\MO$ and considering the definition for Bousfield localizations provided in Section \ref{MODEL} (see \ref{LOCDEF}) we
could pick a class of maps with which we could try to localize $\MO$ with. In fact, under
certain assumptions on $\M$ and the associated class of maps, the localization is shown to exist.

\begin{theorem}\cite[5.1.1]{hirschhorn}
Let $\MO$ be a right proper cellular model category, $K$ be a set of objects in $\M,$ and $S$ the $K$-local equivalences, then the right Bousfield localization of $\MO$ with respect to $S$ exists.
\end{theorem}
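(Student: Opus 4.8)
The plan is to verify directly that the three classes prescribed by Definition \ref{LOCDEF}(2) satisfy the model category axioms, exploiting the fact that right localization leaves the fibrations untouched. Write $S$ for the class of $K$-colocal equivalences, detected by homotopy function complexes: a map $f\colon X\to Y$ lies in $S$ exactly when $\mathrm{map}(A,f)$ is a weak equivalence for every $A\in K$. The first step is routine: since each $\mathrm{map}(A,-)$ preserves the relevant structure, $S$ inherits the 2 out of 3 property and closure under retracts from the weak equivalences on simplicial sets. I would then take the fibrations of $R_S\M$ to be the fibrations $\F$ of $\MO$ and the cofibrations to be $\presuper{\lift}{(\F\cap S)}$, as the definition dictates, so that everything reduces to producing the two weak factorization systems.

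The key structural observation I would exploit is that one of the two required weak factorization systems is inherited verbatim from $\MO$. Because $R_S\M$ and $\MO$ share the class $\F$ of fibrations, and because the right class of a weak factorization system determines the left class by the retract argument, the $S$-trivial cofibrations $\presuper{\lift}{\F}$ coincide with the ordinary trivial cofibrations $\C\cap\W$ of $\MO$. Hence the system $(\C\cap\W,\F)$ of $\MO$ is already the trivial-cofibration/fibration system of $R_S\M$, and the factorization of every map as a trivial cofibration followed by a fibration comes for free.

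All the work therefore concentrates on the second weak factorization system $(\C_S,\F\cap S)$, whose right class consists of the fibrations that are simultaneously $K$-colocal equivalences. I would realize $\F\cap S$ as the right class of a cofibrantly generated system, that is, exhibit a \emph{set} $Q$ of maps with $\F\cap S=Q^{\lift}$; the cofibrations are then $\C_S=\presuper{\lift}{(Q^{\lift})}$ and the small object argument supplies the missing factorization $f=p\circ i$ with $i\in\C_S$ and $p\in\F\cap S$. The natural candidate for $Q$ augments the generating cofibrations $\I$ of $\MO$ with horns built out of the objects of $K$, so that lifting against $Q$ simultaneously forces a map to be a fibration and to induce weak equivalences on $\mathrm{map}(A,-)$ for every $A\in K$.

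The hard part is producing $Q$ as a genuine set rather than a proper class, and verifying the equality $\F\cap S=Q^{\lift}$. This is precisely where the hypotheses enter: cellularity furnishes cardinality bounds on cells and subcomplexes together with the compactness of the objects of $K$, which lets one cap the size of the maps that must be tested and collect them into a set; and right properness guarantees that $K$-colocal equivalences are stable under pullback along fibrations, which is what forces $Q^{\lift}\subseteq S$ and hence $Q^{\lift}=\F\cap S$. Once this characterization and its factorization are in hand, the two weak factorization systems together with the 2 out of 3 property from the first step assemble into the model structure $R_S\M$, whose fibrations, weak equivalences, and cofibrations are by construction exactly those required by Definition \ref{LOCDEF}(2).
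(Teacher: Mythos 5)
The paper offers no proof of this statement; it is quoted from Hirschhorn [5.1.1] (note that for a \emph{right} localization the class $S$ should be the $K$-\emph{colocal} equivalences, as you correctly use). So your sketch can only be measured against Hirschhorn's argument, and its overall architecture does match his: one weak factorization system is to be inherited from $\MO$, the other produced by a small object argument on a set of horns built from $K$, with cellularity supplying smallness and right properness entering through the interaction of colocal equivalences with fibrations and fibrant approximation.

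The genuine gap is in your second paragraph. You declare the trivial cofibrations of $R_S\M$ to be $\presuper{\lift}{\F}$ and conclude they equal $\C\cap\W$ ``by the retract argument.'' But the trivial cofibrations of the putative structure are, by definition, $\presuper{\lift}{(\F\cap S)}\cap S$; the identity ``trivial cofibrations $=\presuper{\lift}{\F}$'' is a \emph{consequence} of the model category axioms, so invoking it before those axioms are verified is circular. The easy inclusion is $\presuper{\lift}{\F}\subseteq\presuper{\lift}{(\F\cap S)}\cap S$; the reverse inclusion --- a map that is both a colocal cofibration and a colocal equivalence lifts against every fibration --- is the crux of Hirschhorn's proof (his Section 5.3), established via a colocal Whitehead theorem and cofibrant approximation arguments, and it is precisely the kind of statement this paper must impose as a standing hypothesis elsewhere (condition (v) of Theorem \ref{DIAGDOWN}). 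Nothing in your sketch supplies it. Two smaller problems: your generating set $Q$ should adjoin the $K$-horns to the generating \emph{acyclic} cofibrations $\J$ (so that $Q^{\lift}\subseteq\J^{\lift}=\F$), not to the generating cofibrations $\I$, since $\I^{\lift}$ is already the class of trivial fibrations and would force $Q^{\lift}\subseteq\F\cap\W\subsetneq\F\cap S$; and no Bousfield--Smith cardinality argument is needed to see that $Q$ is a set (it is one because $K$ is), cellularity being used instead for the smallness of the cofibrant domains of the horns and in the Whitehead-type argument above.
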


\begin{remark}
In this paper we work primarily with right Bousfield delocalizations, but it is worth mentioning that there is also a well known analogous theorem on the existence of left Bousfield localizations \cite[4.1.1]{hirschhorn}
We would like a similar existence theorem for right Bousfield delocalizations and in order to provide one we must first consider the following definition.
\end{remark}

\begin{definition}
\label{INT}
Let $\M$ be a category with two model category structures $\MO$ and $\MT$ with cofibrations, fibrations and weak equivalences given be $\CO , \CT , \FO , \FT , \WO ,$ and $\WT$ respectively with the property that $\FO =\FT.$  We define the \textbf{\textit{right intersected}} model category of $\MO$ and $\MT$ (denoted $\MO \cap \MT$) to be the model category structure on $\M$ (if it exists) with the following classes of maps:
\begin{itemize}
\item fibrations $ \defeq \FO = \FT$
\item weak equivalences $\defeq \WO \cap \WT$
\item cofibrations $\defeq \presuper{\lift}{(\FO \cap \WO \cap \WT )}$
\end{itemize}
\end{definition}

\begin{remark}
It should be noticed that if $\MO \cap \MT$ exists, then it is a right Bousfield delocalization of both $\MO$ and $\MT.$
\end{remark}

With this, we can now provide an existence theorem for $\MO \cap \MT.$ 

\begin{theorem}
\label{MAIN}
Let $\M$ be a category with two cofibrantly generated model category structures on $\MO$ and $\MT$ with generating cofibrations $\IO$ and $\IT$ and generating acyclic cofibrations $\JO$ and $\JT$ respectively (whose domains are small relative to $\M$).  Assume also that fibrations $\FO$ and $\FT$ of $\MO$ and $\MT$ agree, then the model category structure of $\M,$ given by definition \ref{INT} exists.
\end{theorem}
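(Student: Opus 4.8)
The plan is to realize the structure of Definition \ref{INT} as a cofibrantly generated model category by invoking Kan's recognition theorem (\ref{HOVEY}). I would take the weak equivalences to be $\W \defeq \WO \cap \WT$, the generating cofibrations to be $\I \defeq \IO \cup \IT$, and the generating acyclic cofibrations to be $\J \defeq \JO$ (here $\JT$, or $\JO \cup \JT$, would serve equally well, since $\JO^{\lift} = \JT^{\lift} = \FO = \FT$). The first routine task is to compute the two lifting classes: a map lifts against $\IO \cup \IT$ exactly when it lifts against each factor, so $\I^{\lift} = \IO^{\lift} \cap \IT^{\lift} = (\FO \cap \WO) \cap (\FT \cap \WT) = \FO \cap \WO \cap \WT$, while $\J^{\lift} = \JO^{\lift} = \FO$. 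These already identify the prospective acyclic fibrations with $\FO \cap \WO \cap \WT$ and the prospective fibrations with $\FO = \FT$, matching the definition; it remains to certify the six conditions of \ref{HOVEY}.

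The conceptual heart of the argument is the following observation, which is where the hypothesis $\FO = \FT$ is used. In any model category the acyclic cofibrations form the left class of the weak factorization system $(\C \cap \W, \F)$, so $\CO \cap \WO = \presuper{\lift}{\FO}$ and $\CT \cap \WT = \presuper{\lift}{\FT}$; since $\FO = \FT$ these two classes coincide. Because $\JO \subseteq \CO \cap \WO = \presuper{\lift}{\FO}$ and the latter class is closed under pushout and transfinite composition, every map in $\JO$-cell lies in $\presuper{\lift}{\FO} = \CO \cap \WO = \CT \cap \WT$, hence in $\WO$ \emph{and} in $\WT$, so $\J$-cell $\subseteq \W$. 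This is the step I expect to be the main obstacle to a naive attempt, since a priori there is no reason for $\MO$-acyclic cofibrations to be $\MT$-weak equivalences; it is precisely the coincidence of fibrations that forces it.

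With this in hand the conditions of \ref{HOVEY} fall out. Condition (i) holds because the intersection of two classes each satisfying 2-out-of-3 and closed under retracts again has both properties. Conditions (ii) and (iii) follow since the domains of $\IO, \IT$ and $\JO$ are small relative to $\M$, hence relative to the smaller classes $\I$-cell and $\J$-cell. For (iv) the observation above gives $\J$-cell $\subseteq \W$, and since $\presuper{\lift}{\FO} \subseteq \presuper{\lift}{(\FO \cap \WO \cap \WT)} = \presuper{\lift}{(\I^{\lift})}$ we also get $\J$-cell $\subseteq \presuper{\lift}{(\I^{\lift})}$. Condition (v) is immediate from $\I^{\lift} = \FO \cap \WO \cap \WT \subseteq (\WO \cap \WT) \cap \FO = \W \cap \J^{\lift}$. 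Finally, condition (vi) collapses entirely: its second alternative reads $\W \cap \J^{\lift} \subseteq \I^{\lift}$, and in fact $\W \cap \J^{\lift} = (\WO \cap \WT) \cap \FO = \FO \cap \WO \cap \WT = \I^{\lift}$, so the inclusion is an equality.

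Kan's theorem then produces a cofibrantly generated model structure with weak equivalences $\W = \WO \cap \WT$, fibrations $\J^{\lift} = \FO = \FT$, and cofibrations equal to the maps with the left lifting property against the acyclic fibrations $\I^{\lift} = \FO \cap \WO \cap \WT$, that is $\presuper{\lift}{(\FO \cap \WO \cap \WT)}$. These are exactly the three classes prescribed in Definition \ref{INT}, so $\MO \cap \MT$ exists, completing the proof.
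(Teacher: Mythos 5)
Your proposal is correct and follows essentially the same route as the paper: both verify the six conditions of Kan's recognition theorem with $\I=\IO\cup\IT$, $\J=\JO$, $\W=\WO\cap\WT$, and both hinge on the observation that $\FO=\FT$ forces the acyclic cofibrations of $\MO$ and $\MT$ to coincide (the paper phrases this as ``$\MT$ could have been cofibrantly generated by $\IT$ and $\JO$''), which is what puts $\J$-cell inside $\WT$ as well as $\WO$. Your computation $\I^{\lift}=\FO\cap\WO\cap\WT=\W\cap\J^{\lift}$ streamlines the verification of conditions (iv)--(vi) slightly, but the substance is identical.
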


\subsection{$A$-cellular model categories}

Before providing the proof (which is given in Section \ref{PROOF}) we consider an interesting
example.
As a homotopy theorist, when studying topological spaces, concern is placed on
$CW$ complexes up to homotopy equivalence. One way to make such considerations is
by using the Quillen model category structure on the category of topological spaces.
While $CW$ complexes are very useful, one could also consider the spaces obtained by building
with something besides the zero sphere (and the associated homotopy colimits). This
generalization to other ``$A$-cellular" spaces was studied by Dror Farjoun and Nofech in
\cite{dror} and \cite{nofech} respectively, where ``$A$-cellular" spaces are the cofibrant objects in the model category
structure on pointed topological spaces associated to a pointed cofibrant object $A$ (see
definition \ref{ACELL}) much like pointed $CW$ complexes are the cofibrant objects in the standard model
category structure on pointed topological spaces.
It turns out that this model category structure associated to $A$ is actually a right
Bousfield localization of the standard model category structure on pointed topological spaces. Nofech
defines this localized model category structure in more generality and starts with any
pointed closed simplicial model category (not necessarily pointed topological spaces).

\begin{definition}
\cite[1.0]{nofech}
\label{ACELL}
Let $C$ be a pointed closed simplicial model category and $A$ be a cofibrant object of $C.$  We call $C^A$ a closed model category structure associated with the localization of $C$ with respect to $A$ if the cofibrations, fibrations and weak equivalences in $C^A$ are defined as follows:
\begin{itemize}
\item $W_{C^A} \defeq \{\varphi \colon B \rightarrow C | \varphi_{f_\ast} \colon \Hom(A,\tilde{B})\rightarrow \Hom(A,C)$ is a weak equivalence of simplicial sets $\}$
\item $Fib_{C^A} \defeq Fib_C$
\item $Cof_{C^A} \defeq \presuper{\lift}{(W_{C^A} \cap Fib_{C^A})}$
\end{itemize}
where $\varphi_{f_\ast}$ is the fibrant approximation of $\varphi.$
\end{definition}

One question that can be asked in this right delocalization context is ``What model
category structure is obtained from the right intersection of two of these $A$-cellular model
categories?" For this we will consider the case when $C$ is the category of pointed topological
spaces $Top_\ast.$ We have the following proposition.

\begin{proposition}
Given two cofibrant pointed topological spaces $A$ and $A',$ $Top_\ast^A \cap Top_\ast^{A'} = Top_\ast^{A \vee A'}.$
\end{proposition}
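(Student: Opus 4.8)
The plan is to verify that the three defining classes of maps of the right intersected model category $Top_\ast^A \cap Top_\ast^{A'}$ coincide, one at a time, with those of Nofech's model category $Top_\ast^{A\vee A'}$; since a model structure is determined by its fibrations together with its weak equivalences (the cofibrations then being forced as $\presuper{\lift}{(\F\cap\W)}$), this identifies the two structures. All three of $Top_\ast^A$, $Top_\ast^{A'}$ and $Top_\ast^{A\vee A'}$ take their fibrations to be $Fib_{Top_\ast}$, so the fibrations agree and Definition \ref{INT} genuinely applies. Moreover each of their weak equivalence classes is defined after fibrant approximation in the common ambient category $Top_\ast$, so a single replacement functor $\tilde{(-)}$ serves all three. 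I first record that $A\vee A'$ is cofibrant, being a coproduct of the cofibrant objects $A$ and $A'$, so that $Top_\ast^{A\vee A'}$ is defined.

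The heart of the argument is the identification of the weak equivalences. Because the wedge $A\vee A'$ is the coproduct of $A$ and $A'$ in pointed spaces, and the simplicial mapping functor $\Hom(-,X)$ carries coproducts in its first variable to products, there is a natural isomorphism
\[
\Hom(A\vee A',X)\;\cong\;\Hom(A,X)\times\Hom(A',X).
\]
Applying this to a map $\varphi\colon B\to C$ and its fibrant approximation, the induced map $\Hom(A\vee A',\tilde B)\to\Hom(A\vee A',\tilde C)$ is naturally identified with the product of the two maps $\Hom(A,\tilde B)\to\Hom(A,\tilde C)$ and $\Hom(A',\tilde B)\to\Hom(A',\tilde C)$. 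Since the homotopy groups of a product of simplicial sets split as the product of the homotopy groups of the factors (at every basepoint), a map of products of simplicial sets is a weak equivalence exactly when each of its two factors is. Hence $\varphi$ lies in $W_{Top_\ast^{A\vee A'}}$ if and only if it lies in both $W_{Top_\ast^A}$ and $W_{Top_\ast^{A'}}$; that is, $W_{Top_\ast^{A\vee A'}}=W_{Top_\ast^A}\cap W_{Top_\ast^{A'}}$.

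With the fibrations and the weak equivalences now matched, the cofibrations match automatically: in $Top_\ast^{A\vee A'}$ they are $\presuper{\lift}{(W_{Top_\ast^{A\vee A'}}\cap Fib)}$, whereas in $Top_\ast^A\cap Top_\ast^{A'}$ they are $\presuper{\lift}{(Fib\cap W_{Top_\ast^A}\cap W_{Top_\ast^{A'}})}$, and the two classes being lifted against are equal by the previous paragraph. Thus $Top_\ast^{A\vee A'}$ already realizes the three prescribed classes of Definition \ref{INT}, which simultaneously shows that the right intersection exists and that it equals $Top_\ast^{A\vee A'}$.

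The genuinely substantive points are the naturality of the product decomposition of $\Hom(A\vee A',-)$ and the factorwise detection of weak equivalences of simplicial sets; once these are in place, the equality of the two model structures is forced rather than constructed. Consequently I expect the main care to go not into any hard computation but into confirming the bookkeeping that underlies it, namely that the fibrant approximation appearing in all three instances of Definition \ref{ACELL} may be taken to be one and the same functor on the shared ambient structure $Top_\ast$, so that the identification of induced maps on mapping spaces is legitimate.
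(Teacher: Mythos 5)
Your proof is correct and follows essentially the same route as the paper: both reduce the statement to the equality of weak equivalence classes and then exploit the natural isomorphism $\Hom(A\vee A',-)\cong\Hom(A,-)\times\Hom(A',-)$. The only cosmetic difference is in the converse direction, where the paper invokes that $A$ and $A'$ are retracts of $A\vee A'$ (so the induced maps are retracts of $\varphi_{f_\ast}^{A\vee A'}$ and one uses closure of weak equivalences under retracts), while you use factorwise detection of weak equivalences on products of nonempty simplicial sets; both are valid here.
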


\begin{proof}
For this proof we simply need to show that the model category $Top_\ast^{A\vee A'}$ can be described via the model category $Top_\ast^A \cap Top_\ast^{A'},$ but this amounts to showing that the weak equivalences agree.

Given a map $\varphi\colon B \rightarrow C$ we would like to show that both $\varphi_{f_\ast}^A \colon \Hom(A,\tilde{B})\rightarrow \Hom(A,C)$ and $\varphi_{f_\ast}^{A'} \colon \Hom(A',\tilde{B})\rightarrow \Hom(A',C)$ are weak equivalences if and only if $\varphi_{f_\ast}^{A\vee A'}\colon \Hom(A\vee A',\tilde{B})\rightarrow \Hom(A\vee A',C)$ is a weak equivalence.  We see that since $\Hom(A\vee A',-)\cong \Hom(A,-)\times\Hom(A',-),$ $\varphi_{f_\ast}^A$ and $\varphi_{f_\ast}^{A'}$ are weak equivalences, then $\varphi_{f_\ast}^{A\vee A'}$ must also be a weak equivalence.  Secondly, we have that $A$ and $A'$ are both retracts of $A \vee A'$ giving us that $\varphi_{f_\ast}^A$ and $\varphi_{f_\ast}^{A'}$ are both retracts of $\varphi_{f_\ast}^{A\vee A'}.$  This implies that if $\varphi_{f_\ast}^{A\vee A'}$ is a weak equivalence, then $\varphi_{f_\ast}^A$ and $\varphi_{f_\ast}^{A'}$ must also be weak equivalences.
\end{proof}

\subsection{Bousfield Quivers}
In \cite{salch} Salch describes the bi-colored quiver obtained by considering the existing model
category structures on a category $\M$ by letting the nodes be distinct model categories
and the directed edges be left and right Bousfield localizations (colored based on
whether the localization is right or left). An interesting fact that Salch finds is that
these Bousfield quivers are not in general connected.
A question that can be asked in this setting is ``Given two cofibrantly generated model
categories $\MO$ and $\MT$ whose fibrations agree, must $\MO$ and $\MT$ be contained in the same
component?" The answer is what might be expected.

\begin{corollary}
Given two cofibrantly generated model category structures on a category $\M$ denoted $\MO$ and $\MT$ (where generating cofibrations and generating acyclic cofibrations have small domains relative to $\M$); if fibrations agree, then $\MO$ and $\MT$ are in the same component of the Bousfield quiver discussed in \cite{salch} via the directed edges from $\MO \cap \MT.$
\end{corollary}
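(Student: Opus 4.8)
The plan is to read the statement off directly from Theorem~\ref{MAIN} together with the remark immediately following Definition~\ref{INT}. First I would invoke Theorem~\ref{MAIN}: the hypotheses---that $\MO$ and $\MT$ are cofibrantly generated with generating cofibrations and generating acyclic cofibrations whose domains are small relative to $\M$, and that their fibrations agree---are exactly those required to conclude that the right intersected model category $\MO \cap \MT$ of Definition~\ref{INT} exists. This realizes $\MO \cap \MT$ as a node of the Bousfield quiver of \cite{salch}.

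Next I would unwind what the remark after Definition~\ref{INT} asserts, namely that $\MO \cap \MT$ is a right Bousfield delocalization of both $\MO$ and $\MT$. By definition this means that $\MO$ is a right Bousfield localization of $\MO \cap \MT$, and likewise $\MT$ is a right Bousfield localization of $\MO \cap \MT$. One checks this directly against the localization definition: the weak equivalences of $\MO \cap \MT$ are $\WO \cap \WT$, which is contained in $\WO$, while the fibrations of $\MO \cap \MT$ are $\FO = \FT$, agreeing with the fibrations of $\MO$; the symmetric statement holds for $\MT$.

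Finally I would translate these two localization relationships into the language of the quiver. Each right Bousfield localization is recorded as a directed edge oriented from the base structure to the localized one, so the two relationships above furnish a directed edge from the node $\MO \cap \MT$ to the node $\MO$ and a directed edge from $\MO \cap \MT$ to $\MT$. These two edges exhibit $\MO$ and $\MT$ in a common component, joined through $\MO \cap \MT$ precisely via edges emanating from that node, which is exactly the assertion of the corollary.

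The content of the argument is therefore carried entirely by Theorem~\ref{MAIN}; the only points demanding care are organizational. I would confirm that the orientation convention of \cite{salch} matches the phrase ``directed edges from $\MO \cap \MT$,'' and note that ``same component'' must be read in the weakly connected sense, since both edges leave $\MO \cap \MT$ and so do not on their own provide a directed path between $\MO$ and $\MT$. (In the degenerate case where, say, $\WO \subseteq \WT$ forces $\MO \cap \MT = \MO$, the two nodes coincide and one edge becomes a direct localization of $\MT$ by $\MO$, so connectivity persists.) I expect no substantive obstacle beyond this bookkeeping.
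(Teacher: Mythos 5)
Your proposal is correct and takes the same route as the paper, which omits the proof entirely on the grounds that the corollary is a direct consequence of Theorem \ref{MAIN}: you invoke that theorem to obtain $\MO \cap \MT$ and then use the remark after Definition \ref{INT} to produce the two localization edges out of it. Your added care about edge orientation and weak connectedness is reasonable bookkeeping but not something the paper addresses.
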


We omit the proof as this corollary is a direct consequence of Theorem \ref{MAIN}.

\section{\bf Proof of Theorem \ref{MAIN}}
\label{PROOF}

In the proof of Theorem \ref{MAIN} we not only show that the given cofibrantly generated model categories $\MO$ and $\MT,$ that $\MO \cap \MT$ is a model category structure on $\M,$ but that $\MO \cap \MT$ is cofibrantly generated as well.

\begin{proof}[of Theorem \ref{MAIN}]

Let $\M$ be a category with two cofibrantly generated model category structures $\MO$ and $\MT$ which agree on fibrations.  Let the generating cofibrations be $\IO$ and $\IT$ respectively and generating acyclic cofibrations be $\JO$ and $\JT$ respectively (all with small domains relative to $\M$) and weak equivalences be given by $\WO$ and $\WT$ respectively.  We aim to prove that $\MO \cap \MT$ is a cofibrantly generated model category by showing that the assumptions of Theorem \ref{HOVEY} are satisfied where $\I=\IO \cup \IT, \W=\WO \cap \WT, \J=\JO.$  For part $i)$ notice $\WO$ and $\WT$ both have the 2 out of 3 property and are closed under retracts, so certainly $\W=\WO \cap \WT$ is as well.  For parts $ii)$ and $iii)$ we assume that $\IO, \IT, \JO$ all have small domains relative to $\M.$  For part $iv)$ notice that 
$$\W \cap \presuper{\lift}{(\I^{\lift})}$$
$$=(\WO \cap \WT) \cap  \presuper{\lift}{((\IO \cup \IT)^{\lift})} \supseteq (\WO \cap \WT)\cap (\presuper{\lift}{(\IO^{\lift})} \cup \presuper{\lift}{(\IT^{\lift})})$$
$$=\WO \cap (\presuper{\lift}{(\IO^{\lift})} \cup \presuper{\lift}{(\IT^{\lift})})\cap \WT \cap (\presuper{\lift}{(\IO^{\lift})} \cup \presuper{\lift}{(\IT^{\lift})})$$
but $\JO-cell \subseteq \WO \cap \presuper{\lift}{(\IO^{\lift})}$ and since $\MT$ could have been cofibrantly generated by $\IT$ and $\JO$ as cofibrations and acyclic cofibrations respectively (since fibrations between $\MO$ and $\MT$ agree) we also see that $\JO-cell \subseteq \WT \cap \presuper{\lift}{(\IT^{\lift})}$ which gives us that $\J-cell \subseteq \W \cap \presuper{\lift}{(\I^{\lift})}.$

For part $v)$ we see that $\J^{\lift} = \JO^{\lift} = \JT^{\lift}$ since fibrations agree so $\W\cap \J^{\lift} = (\WO \cap \JO^{\lift})\cap (\WT \cap \JT^{\lift}).$  We also have that $\IO^{\lift} \subseteq \WO \cap \JO^{\lift}$ and $\IT^{\lift} \subseteq \WT \cap \JT^{\lift},$ but since $\I^{\lift} =\IO^{\lift} \cap \IT^{\lift}$ this implies that $\I^{\lift} \subseteq \W\cap \J^{\lift}.$  Lastly, for part $vi)$ notice that $\W\cap \J^{\lift} = (\WO \cap \J^{\lift})\cap (\WT \cap \J^{\lift})$ and $\I^{\lift}=\IO^{\lift}\cap \IT^{\lift},$ but$(\WO \cap \J^{\lift})\subseteq \IO^{\lift}$ and $(\WT \cap \J^{\lift})\subseteq \IT^{\lift}$ since $\J^{\lift} = \JO^{\lift} = \JT^{\lift}$ so $\W \cap \J^{\lift} \subseteq \I^{\lift}$ and therefore $\MO \cap \MT$ is cofibrantly generated by $\IO \cup \IT$ and $\J.$

\end{proof}

\section{\bf Delocalization of diagram categories}
\label{DIAGRAM}

A good reason for localizing a model category $\M$ is to change the cofibrant-fibrant objects while maintaining much of the data from the original model category structure on $\M.$  The purpose of doing so is  so that the homotopy category associated to the localization (determined by its cofibrant-fibrant objects) can be modified accordingly.  One example of this is the creation of a homotopy category which describes homotopy algebras over algebraic theories, semi-theories, multi-sorted algebraic theories and finite product sketches studied by Badzioch in \cite{badziochI} and \cite{badziochII}, Bergner in \cite{bergner} and the author in \cite{corrigansalterI} respectively.

\begin{proposition}\cite[4.5]{corrigansalterI}
\label{FINITE}
Let $(C,\kappa)$ be a finite product sketch i.e. 
$C$ is a small category and $\kappa$ is a set of cones in $C.$  The full subcategory of $\spaces^C$ spanned by the homotopy $(C,\kappa)$-algebras with objectwise weak equivalences inverted is equivalent to the homotopy category of $L\spaces^C,$ where $L\spaces^C$ is a suitable left Bousfield localization of $\spaces^C.$
\end{proposition}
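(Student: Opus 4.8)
The plan is to realize $L\spaces^C$ as a left Bousfield localization of the projective model structure on $\spaces^C$ at a set of maps extracted from the cones in $\kappa$, and then to match the fibrant objects of that localization with the homotopy $(C,\kappa)$-algebras. First I would equip $\spaces^C$ with the projective model structure, in which weak equivalences and fibrations are detected objectwise. Since $\spaces$ is a cofibrantly generated, left proper, cellular simplicial model category and $C$ is small, $\spaces^C$ inherits all of these properties, so the hypotheses of the left Bousfield localization existence theorem \cite[4.1.1]{hirschhorn} are available.

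Next, for each cone $\gamma\in\kappa$ --- with apex $c$ and projections to the (finitely many, as $(C,\kappa)$ is a finite product sketch) objects $d_1,\dots,d_n$ --- the legs $c\to d_i$ induce maps of representables $C(d_i,-)\to C(c,-)$, which assemble into a single comparison map
\[ f_\gamma \colon \coprod_{i=1}^{n} C(d_i,-) \longrightarrow C(c,-), \]
the coproduct standing in for the homotopy colimit of representables over a discrete (product) cone. I then set $S=\{f_\gamma \mid \gamma\in\kappa\}$ and define $L\spaces^C \defeq L_S\spaces^C$, which exists by \cite[4.1.1]{hirschhorn}.

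The heart of the argument is to check that the $S$-local objects are exactly the homotopy $(C,\kappa)$-algebras. Each representable is projectively cofibrant and a coproduct of cofibrant objects is cofibrant, so for a fibrant $X$ the homotopy function complex on $C(c,-)$ is computed by the simplicial mapping space, and the enriched Yoneda lemma identifies it with $X(c)$, while the mapping complex out of $\coprod_i C(d_i,-)$ is $\prod_i X(d_i)$. Because local objects are objectwise fibrant, this strict product computes the homotopy product, so $X$ being $f_\gamma$-local says precisely that the canonical map $X(c)\to\prod_i X(d_i)$ is a weak equivalence, i.e. that $X$ carries $\gamma$ to a homotopy product. Hence the $S$-local (fibrant) objects are exactly the fibrant homotopy $(C,\kappa)$-algebras. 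Finally I would invoke the general fact that the homotopy category of $L_S\spaces^C$ is equivalent to the full subcategory of $\operatorname{Ho}(\spaces^C)$ spanned by the $S$-local objects (weak equivalences between local objects agree with the original, objectwise weak equivalences; see \cite{hirschhorn}); since $\operatorname{Ho}(\spaces^C)$ for the projective structure is $\spaces^C$ with objectwise weak equivalences inverted, this yields the claimed equivalence.

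I expect the main obstacle to be the identification of the local objects: one must arrange the homotopy-colimit-of-representables construction so that $\mathrm{map}(f_\gamma,X)$ genuinely recovers the comparison map $X(c)\to\prod_i X(d_i)$ with every function complex derived, which relies on the domains of the $f_\gamma$ being cofibrant and on objectwise fibrancy of local objects letting the strict product replace the homotopy product. The remaining verification --- that the projective structure on $\spaces^C$ satisfies the properness and cellularity needed for \cite[4.1.1]{hirschhorn} --- is comparatively routine.
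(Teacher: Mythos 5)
The paper gives no proof of this proposition; it is imported verbatim from \cite[4.5]{corrigansalterI}, where the argument is precisely the one you outline (following the Badzioch--Bergner rigidification strategy: localize the projective structure at the cone-comparison maps $\coprod_i C(d_i,-)\to C(c,-)$ and identify the local objects with the fibrant homotopy $(C,\kappa)$-algebras via the enriched Yoneda lemma). Your reconstruction is correct and takes essentially the same approach as the cited source.
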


Before going much further we will take a look at the category $\spaces^C,$ or more generally $\M^C$ and the associated model category structure (implied to exist in Proposition \ref{FINITE}).

\begin{definition}
Let $\M$ be a category and $C$ be a small category.  By $\M^C$ we mean the \emph{category of diagrams in $\M$ with the shape of $C.$}  Objects in $\M^C$ are functors from $C$ to $\M$ and morphisms are natural transformations.
\end{definition}

If $\M$ is a cofibrantly generated model category, then we have an induced model category structure on $\M^C.$

\begin{theorem}
\cite[11.6.1]{hirschhorn}
\label{OBJFIB}
If $C$ is a small category and $\M$ is a cofibrantly generated model category with generating cofibrations $\I$ and generating acyclic cofibrations $\J,$ then $\M^C$ is a cofibrantly generated model category with the following classes of maps:
\begin{itemize}
\item weak equivalences are objectwise weak equivalences
\item fibrations are objectwise fibrations
\item cofibrations are maps with the left lifting property with respect to objectwise acyclic fibrations.
\end{itemize}
\end{theorem}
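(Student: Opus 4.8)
The plan is to apply Kan's recognition theorem (Theorem \ref{HOVEY}) to $\M^C$, manufacturing explicit generating sets out of those of $\M$. First I note that $\M^C$ is complete and cocomplete with limits and colimits computed objectwise, so the hypotheses of Theorem \ref{HOVEY} on the ambient category are met. For each $c\in C$ the evaluation functor $\mathrm{Ev}_c\colon\M^C\to\M$, $X\mapsto X(c)$, admits a left adjoint $F_c\colon\M\to\M^C$ given by $F_c(A)(d)=\coprod_{C(c,d)}A$. I would take as candidate generating sets $\I_C=\{F_c(i)\mid c\in C,\ i\in\I\}$ and $\J_C=\{F_c(j)\mid c\in C,\ j\in\J\}$. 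The adjunction supplies the key dictionary: a map $f$ of $\M^C$ lies in $\J_C^{\lift}$ if and only if each $\mathrm{Ev}_c(f)=f_c$ lies in $\J^{\lift}$, i.e.\ $f$ is an objectwise fibration, and likewise $\I_C^{\lift}$ is exactly the class of objectwise acyclic fibrations. Consequently $\presuper{\lift}{(\I_C^{\lift})}$ is precisely the class of maps with the left lifting property against objectwise acyclic fibrations, matching the description of cofibrations in the statement.

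With the weak equivalences $\W_C$ defined objectwise, I would verify conditions $i)$ through $vi)$ of Theorem \ref{HOVEY}, each reducing to an objectwise statement in $\M$. Condition $i)$ is inherited objectwise from $\W$. Since $F_c(A)(d)$ is a coproduct of copies of $A$, each $F_c$ carries (acyclic) cofibrations of $\M$ to objectwise (acyclic) cofibrations of $\M^C$; as objectwise acyclic cofibrations are stable under pushout and transfinite composition (being the left class of the weak factorization system $(\C\cap\W,\F)$ computed objectwise), $\J_C$-cell consists of objectwise acyclic cofibrations, so $\J_C$-cell$\subseteq\W_C$. Because cells always lift against the corresponding right class and $\I_C^{\lift}\subseteq\J_C^{\lift}$, we get $\J_C$-cell$\subseteq\presuper{\lift}{(\J_C^{\lift})}\subseteq\presuper{\lift}{(\I_C^{\lift})}$, which together give condition $iv)$. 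For $v)$, the class $\I_C^{\lift}$ of objectwise acyclic fibrations sits inside both $\W_C$ and $\J_C^{\lift}$. For $vi)$ the inclusion $\W_C\cap\J_C^{\lift}\subseteq\I_C^{\lift}$ is in fact an equality, since an objectwise fibration that is an objectwise weak equivalence is exactly an objectwise acyclic fibration.

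I expect the main obstacle to be the smallness conditions $ii)$ and $iii)$, namely that the domains $F_c(\mathrm{dom}\,i)$ and $F_c(\mathrm{dom}\,j)$ are small relative to $\I_C$-cell and $\J_C$-cell. Here I would exploit the adjunction isomorphism $\Hom_{\M^C}(F_c(A),-)\cong\Hom_{\M}(A,\mathrm{Ev}_c(-))$ together with the fact that $\mathrm{Ev}_c$ preserves the relevant filtered colimits, since these are formed objectwise in $\M^C$; smallness of $A$ in $\M$ then transfers to smallness of $F_c(A)$ in $\M^C$. With smallness established, Theorem \ref{HOVEY} yields a cofibrantly generated model structure on $\M^C$ generated by $\I_C$ and $\J_C$, and unwinding the output identifies the fibrations as $\J_C^{\lift}$ (objectwise fibrations), the weak equivalences as $\W_C$ (objectwise weak equivalences), and the cofibrations as $\presuper{\lift}{(\I_C^{\lift})}$ (maps with the left lifting property against objectwise acyclic fibrations), exactly as claimed.
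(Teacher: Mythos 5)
Your proof is correct and follows essentially the same route as the source the paper cites for this result (Hirschhorn 11.6.1, which the paper quotes without reproving): generating sets $\{F_c(i)\}$ and $\{F_c(j)\}$ built from the left adjoints to evaluation, the adjunction dictionary identifying $\J_C^{\lift}$ and $\I_C^{\lift}$ with objectwise (acyclic) fibrations, smallness transferred through $\Hom_{\M^C}(F_c(A),-)\cong\Hom_{\M}(A,\mathrm{Ev}_c(-))$, and Kan's recognition theorem. The only step worth spelling out further is that in verifying smallness one should note $\mathrm{Ev}_c$ sends relative $\I_C$-cell complexes into $\I$-cell (each $F_c(i)(d)$ being a coproduct of copies of $i$), which is immediate and in any case moot under the paper's standing convention that domains are small relative to all of $\M$.
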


Since fibrations are defined objectwise in $\M^C$ from Theorem \ref{OBJFIB}, we will refer to this model category structure as the \textbf{\textit{objectwise fibration}} model category structure.  This theorem gives the existence of the objectwise fibration model category structure if the model category $\M$ is cofibrantly generated, but in general, if these classes of maps in $\M^C$ satisfy the axioms for a model category, then we will refer the the resulting model category as the objectwise fibration model category as well.

With this, we have a proposition relating to the intersection delocalization.

\begin{proposition}
\label{DIAGAG}
Let $\M$ be a category with two cofibrantly generated model category structures $\MO$ and $\MT$ where fibrations agree and $\MO \cap \MT$ gives a cofibrantly generated model category structure for $\M,$ then the model category structures $(\MO \cap \MT)^C$ and $\MO^C \cap \MT^C$ agree.
\end{proposition}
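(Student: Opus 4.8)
The plan is to show that the two model category structures $(\MO \cap \MT)^C$ and $\MO^C \cap \MT^C$ coincide by verifying that they agree on all three classes of maps, exploiting the fact that both are objectwise fibration structures in the sense of Theorem \ref{OBJFIB}, and that two model structures on the same category are equal as soon as they share two of their three distinguished classes. First I would record that $\MO^C$ and $\MT^C$ each exist as cofibrantly generated objectwise fibration model structures by Theorem \ref{OBJFIB}, and that since fibrations in $\MO$ and $\MT$ agree, the objectwise fibrations in $\MO^C$ and $\MT^C$ agree as well. This means $\MO^C$ and $\MT^C$ satisfy the hypotheses of Definition \ref{INT}, so $\MO^C \cap \MT^C$ is defined, with fibrations the common objectwise fibrations, weak equivalences $\W_{\MO^C} \cap \W_{\MT^C}$, and cofibrations determined by the left lifting property against acyclic fibrations.

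Next I would compare the fibrations and the weak equivalences of the two candidate structures directly. For fibrations: the fibrations of $(\MO \cap \MT)^C$ are the objectwise $(\MO \cap \MT)$-fibrations, which are the objectwise $\FO$-fibrations (as $\MO \cap \MT$ has fibration class $\FO = \FT$); these are exactly the fibrations of $\MO^C$, hence of $\MO^C \cap \MT^C$. For weak equivalences: the weak equivalences of $(\MO \cap \MT)^C$ are the objectwise $(\WO \cap \WT)$-equivalences, i.e.\ maps $f$ such that $f_c \in \WO \cap \WT$ for every object $c \in C$. On the other side, the weak equivalences of $\MO^C \cap \MT^C$ are the intersection of objectwise $\WO$-equivalences with objectwise $\WT$-equivalences, i.e.\ maps $f$ with $f_c \in \WO$ for all $c$ and $f_c \in \WT$ for all $c$. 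These two conditions are plainly equivalent, since ``$f_c \in \WO \cap \WT$ for all $c$'' says the same thing as ``$f_c \in \WO$ for all $c$ and $f_c \in \WT$ for all $c$.'' Thus the weak equivalences coincide.

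Having matched both fibrations and weak equivalences, I would conclude that the cofibrations must coincide as well: in any model structure the cofibrations are exactly $\presuper{\lift}{(\F \cap \W)}$, the maps with the left lifting property against acyclic fibrations, so two model structures sharing fibrations and weak equivalences automatically share cofibrations and are therefore identical. Concretely, the cofibrations of both $(\MO \cap \MT)^C$ and $\MO^C \cap \MT^C$ equal $\presuper{\lift}{(\text{acyclic fibrations})}$ computed from the already-matched fibrations and weak equivalences, so the two structures agree.

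The main obstacle, and the only step requiring genuine care rather than bookkeeping, is the interchange of the two limiting operations: that forming diagrams (passing to $\M^C$) and intersecting weak equivalence classes commute. The subtlety is that ``objectwise'' for the intersected structure must be shown to mean the same as ``intersection of the two objectwise structures,'' which relies on the fact that weak equivalences in the diagram category are detected objectwise and that intersection of classes is stable under this objectwise reinterpretation. I would also want to confirm at the outset that $\MO \cap \MT$ itself, being cofibrantly generated by hypothesis (via Theorem \ref{MAIN}), produces a genuine objectwise fibration diagram structure through Theorem \ref{OBJFIB}, so that $(\MO \cap \MT)^C$ is well-defined; once that and the objectwise detection of weak equivalences are in hand, the remaining equalities are the straightforward set-theoretic and lifting-property identifications sketched above.
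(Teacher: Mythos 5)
Your proof is correct and is exactly the argument the paper has in mind: the author states that Proposition \ref{DIAGAG} ``follows directly from definition'' and leaves it to the reader, and your unfolding --- matching the objectwise fibrations, observing that ``objectwise in $\WO\cap\WT$'' is the same condition as ``objectwise in $\WO$ and objectwise in $\WT$,'' and then recovering the cofibrations as $\presuper{\lift}{(\F\cap\W)}$ --- is precisely that direct check. Nothing further is needed.
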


The proof of Proposition \ref{DIAGAG} follows directly from definition so we leave it to the reader.

When working with diagram categories and their model category structures, one question that can be asked is ``Given a model category $\M,$ and small category $C$ where the objectwise fibration model category $\M^C$ exists; for a right (left) Bousfield localization of $\M^C,$ is there a model category structure $\MT$ on $\M$ so that the objectwise fibration model category structure $\MT^C$ agrees with the Bousfield localization of $\M^C?$"  An answer in the affirmative would certainly make dealing with localized diagram model category structures nicer, however in general the answer for Bousfield localizations is no.  We do show though, that for the right intersected delocalization model categories, the analogous questions is easier to satisfy.  We do so by providing a detection theorem for an induced model category structure on $\M$ from a model category structure on $\M^C.$

Before providing the detection theorem and subsequent delocalization theorem, we will define a few needed notions.

\begin{definition}
Let $\M$ be a category and $C$ be a small category.  We have a functor:
$$P\colon \M \rightarrow \M^C$$
where $P(A)\colon C \rightarrow \M$ evaluates as $A$ on every object of $C.$  We refer to this functor as the \textbf{\textit{pointed functor}}, objects $P(A)\in \M^C$ as \textbf{\textit{pointed objects}} and maps $P(f)$ as \textbf{\textit{pointed maps}}.
\end{definition}

\begin{definition}
For a map $f \in \M^C,$ we will refer to the maps $f(\alpha)\colon X(\alpha)\rightarrow Y(\alpha)$ where $\alpha \in C$ as \textbf{\textit{component maps}} (specifically the component map of $f$ at $\alpha$).
\end{definition}

With this we have the following theorem.

\begin{theorem}
\label{DIAGDOWN}
Let $C$ be a small category, $\M$ be a category closed under finite limits and colimits.  If $\M^C$ has a model category structure so that the following is true:
\begin{itemize}
\item[i)] Given a map $f\colon X \rightarrow Y$ in $\M^C,$ if for all $\alpha \in C$ the component map of $f$ at $\alpha$ $f(\alpha)\colon X(\alpha)\rightarrow Y(\alpha) \in \M$ is a component map for some fibration $g\colon X' \rightarrow Y'$ of $\M^C,$ then $f$ is a fibration
\item[ii)]Given a map $f\colon X \rightarrow Y$ in $\M^C,$ if for all $\alpha \in C$ the component map of $f$ at $\alpha$ $f(\alpha)\colon X(\alpha)\rightarrow Y(\alpha) \in \M$ is a component map for some weak equivalence $g\colon X' \rightarrow Y'$ of $\M^C,$ then $f$ is a weak equivalence
\item[iii)] If $f\colon X_\ast \rightarrow Y_\ast$ is a pointed map in $\M^C,$ which has the left lifting property with respect to all pointed fibrations, then $f$ is a weak equivalence.
\item[iv)] The sub-classes of fibrations and acyclic fibrations in the full pointed subcategory of $\M^C,$denoted $\F_\ast$ and $\F_\ast\cap \W_\ast$ respectively, have the property that (in the full pointed subcategory) $(\presuper{\lift}{(\F_\ast)})^{\lift}=\F_\ast$ and $(\presuper{\lift}{(\F_\ast\cap \W_\ast)})^{\lift}=\F_\ast\cap \W_\ast$ 
\item[v)] If a pointed map $f$ lifts with respect to all pointed acyclic fibrations and $f$ is a weak equivalence, then $f$ lifts with respect to all pointed fibrations
\end{itemize}
then $\M$ has a model category structure with the following classes of maps:
\begin{itemize}
\item fibrations $\defeq$ the class of all possible component maps of fibrations in $\M^C$
\item weak equivalences $\defeq$ the class of all possible component maps of weak equivalences in $\M^C$
\item cofibrations $\defeq$ maps with the left lifting property with respect to maps which are both fibrations and weak equivalences.
\end{itemize}
Furthermore, the objectwise fibration model category structure on $\M^C$ using the fibrations and weak equivalences just defined agrees with the original model category structure on $\M^C$ (assumed to exist in the assumptions of this theorem).
\end{theorem}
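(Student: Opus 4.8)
The plan is to exploit the pointed functor $P\colon \M \to \M^C$ as a bridge between the asserted model structure on $\M$ and the given one on $\M^C$. The first step is a detection lemma: for a map $f$ of $\M$, every component of the constant diagram map $P(f)$ equals $f$, so conditions (i) and (ii) say precisely that $P(f)$ is a fibration (resp. weak equivalence) in $\M^C$ as soon as $f$ is a component of some fibration (resp. weak equivalence), while conversely any component of a fibration (resp. weak equivalence) --- in particular $f = P(f)(\alpha)$ --- is by definition a fibration (resp. weak equivalence) of $\M$. Hence $P$ both preserves and reflects fibrations and weak equivalences. This already settles the final ``furthermore'' clause: a map $g$ of $\M^C$ is an objectwise fibration for the new structure iff each $g(\alpha)$ is a component of a fibration, which by condition (i) holds iff $g$ is a fibration in the original structure, and identically for weak equivalences; the cofibrations then agree automatically, being determined by lifting. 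It also gives the $2$-out-of-$3$ axiom for the new weak equivalences, by transporting a composable triple along $P$ and applying $2$-out-of-$3$ in $\M^C$.

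Second, I would reduce everything to the full pointed subcategory $\M_\ast \subseteq \M^C$, which (fully faithfully via $P$, e.g. when $C$ is connected) is equivalent to $\M$, so that a lifting problem in $\M$ has a solution iff the corresponding pointed lifting problem does. Under this identification the new fibrations, acyclic fibrations and cofibrations of $\M$ correspond to $\F_\ast$, $\F_\ast \cap \W_\ast$ and $\presuper{\lift}{(\F_\ast \cap \W_\ast)}$. The weak factorization system $(\C, \F \cap \W)$ then comes almost for free: its left class is defined by the lifting property, and condition (iv) gives $(\presuper{\lift}{(\F_\ast \cap \W_\ast)})^{\lift} = \F_\ast \cap \W_\ast$, which both forces the two classes to determine one another and shows the right class is closed under retracts. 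For the weak factorization system $(\C \cap \W, \F)$ the key is to identify the left class with $\presuper{\lift}{\F_\ast}$: condition (iii) shows $\presuper{\lift}{\F_\ast} \subseteq \W_\ast$ (and such maps are automatically cofibrations, as they lift against all fibrations, hence against the acyclic ones), while condition (v) shows every cofibration that is a weak equivalence lies in $\presuper{\lift}{\F_\ast}$; together these give $\C \cap \W = \presuper{\lift}{\F_\ast}$, with lifting immediate and retract-closure of $\F_\ast$ supplied again by condition (iv).

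What remains, and what I expect to be the main obstacle, is the construction of the two factorizations. The natural move is to factor the pointed map $P(f)$ in the ambient model category $\M^C$ as $P(A) \xrightarrow{i'} Z \xrightarrow{p'} P(B)$; the difficulty is that the intermediate diagram $Z$ need not be pointed, so this is not yet a factorization in $\M$. One repair is to evaluate at a single object $\alpha_0 \in C$: the right-hand leg $p'(\alpha_0)$ is then a component of an acyclic fibration, hence lies in $\F \cap \W$ by the detection lemma, and dually for the acyclic-cofibration/fibration factorization the left leg lands in $\W$. The genuinely delicate point is showing that the corresponding left (resp. right) leg is a cofibration (resp. fibration) of $\M$: a single-component lifting problem does not visibly extend to a diagram-level one, so one cannot directly feed it to the lifting property of $i'$ in $\M^C$. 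When $\M$ admits the relevant $C$-indexed (co)limits one can instead transport the factorization along the adjunctions $\operatorname{colim}_C \dashv P \dashv \lim_C$, since $\operatorname{colim}_C$ carries cofibrations of $\M^C$ to cofibrations of $\M$ and $\lim_C$ dually handles fibrations; but then the matching half (that the transported leg is simultaneously a weak equivalence, resp. that $P$ preserves the cofibration being transported --- which fails for general $C$) is exactly what is not formal, and this is where conditions (iii) and (v) must be used to ``pointify'' the ambient factorization.

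Finally, once both factorizations are in hand, the model-category axioms in the sense of the definition in Section~\ref{MODEL} are assembled from the three pieces above: $2$-out-of-$3$, the weak factorization system $(\C, \F \cap \W)$, and the weak factorization system $(\C \cap \W, \F)$. I would close by recording that the construction is natural in $f$, so the factorizations may be taken functorial if those of $\M^C$ are, and by re-deriving the ``furthermore'' clause from the detection lemma as noted in the first step.
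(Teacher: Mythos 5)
Your overall architecture matches the paper's: two-out-of-three and the ``furthermore'' clause via the pointed functor $P$, the lifting axiom for $(\C\cap\W,\F)$ via conditions (iii) and (v) applied to lifting squares regarded as pointed diagrams, and both factorizations obtained by factoring $P(f)$ in $\M^C$ and evaluating at an object of $C$. But the step you yourself flag as the ``genuinely delicate point'' --- showing that the left leg $i'(\alpha_0)$ of a factorization $P(A)\rightarrow Z\rightarrow P(B)$ is a cofibration (resp.\ acyclic cofibration) of $\M$ --- is left as a genuine gap, and your proposed repair does not close it: transporting along $\mathrm{colim}_C\dashv P\dashv\lim_C$ factors the map $\mathrm{colim}_C P(A)\rightarrow\mathrm{colim}_C P(B)$, and $\mathrm{colim}_C P(A)$ is not $A$ for general $C$, so you never recover a factorization of $f$ itself. (By contrast, the right leg is not delicate at all: $p'(\alpha_0)$ is a component map of a fibration of $\M^C$, hence a fibration of $\M$ by the very definition of the new classes.)

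The missing idea is the paper's Lemma \ref{LEM}: a single-component lifting problem \emph{does} extend to a diagram-level one, via the adjunction $F\colon\M^C\rightleftarrows\M\colon G$ with $F=\mathrm{ev}_\alpha$ and $G(y)(\beta)=\prod_{\beta\rightarrow\alpha}y$ (the right Kan extension along $\{\alpha\}\hookrightarrow C$). Given a square exhibiting a lifting problem of $i'(\alpha)$ against a map $g$ of $\M$ lying in the component-map class $R_\ast$ of the right class $R$ of the ambient weak factorization system, one transposes to a lifting problem of $i'$ against $G(g)$ in $\M^C$; each component of $G(g)$ is a product of copies of $g$, so $G(g)\in R$ by the componentwise detection hypotheses (together with closure of $R$ under products), a lift exists, and its transpose $\varepsilon_w\circ F(\theta)$ solves the original problem. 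This is exactly what puts the component maps of cofibrations (resp.\ acyclic cofibrations) of $\M^C$ into $\C$ (resp.\ into $\C\cap\W$, using condition (iii) for membership in $\W$), and it is the device your argument lacks at its central step. A minor additional caveat: your identification of $\M$ with the full pointed subcategory via $P$ is only an equivalence for connected $C$, whereas the paper works with component maps directly and so avoids needing that restriction.
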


Before getting to the proof we will consider the following lemma.

\begin{lemma}
\label{LEM}
Let $(L,R)$ is a weak factorization system for $\M^C$ with the property that for any map $f\in \M^C,$ if for all $\alpha \in C$ the component maps of $f$ at $\alpha$ are each component maps for some map $g\in R$ then $f\in R.$  Then for $f \in L$ $$f(\alpha)\colon X(\alpha)\rightarrow Y(\alpha)$$ belongs to $\presuper{\lift}{R_\ast}$ where $R_\ast$ is the class of maps in $\M$ which show up as component maps for fibrations in $\M^C.$
\end{lemma}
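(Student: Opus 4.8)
The plan is to turn a component-wise lifting problem in $\M$ into a genuine lifting problem in $\M^C$, solve the latter using $f\in L$, and then read off the component we want. Fix $f\in L$ and an object $\alpha\in C$, and let $p\colon E\to B$ be an arbitrary map in $R_\ast$, so that $p=g(\gamma)$ for some $g\in R$ and some $\gamma\in C$; the goal is to show that $f(\alpha)$ has the left lifting property against $p$. The key device is the evaluation functor $\mathrm{ev}_\alpha\colon \M^C\to\M$, $X\mapsto X(\alpha)$, together with its right adjoint $G_\alpha$ (the right Kan extension along $\{\alpha\}\hookrightarrow C$), which on objects is $G_\alpha(A)(\beta)=\prod_{C(\beta,\alpha)}A$. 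The adjunction bijection $\Hom_{\M}(Z(\alpha),A)\cong \Hom_{\M^C}(Z,G_\alpha A)$, natural in both variables, is precisely what transports lifting data at $\alpha$ back and forth between $\M$ and $\M^C$.

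The first step, which carries the real content, is to show that $G_\alpha(p)\in R$. I would first observe that the constant diagram $P(p)\colon P(E)\to P(B)$ lies in $R$: each of its component maps equals $p=g(\gamma)$, a component map of $g\in R$, so the detection hypothesis on $(L,R)$ forces $P(p)\in R$. Since $R$ is the right class of a weak factorization system it is closed under (small) products, whence $\prod_{C(\beta,\alpha)}P(p)\in R$ for every $\beta$, and the $\beta$-component of this product is $\prod_{C(\beta,\alpha)}p=G_\alpha(p)(\beta)$. Thus every component map of $G_\alpha(p)$ is a component map of a map in $R$, and applying the detection hypothesis a second time yields $G_\alpha(p)\in R$.

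With this in hand the lift follows by adjunction. Given any square $u\colon X(\alpha)\to E$, $v\colon Y(\alpha)\to B$ with $p\circ u=v\circ f(\alpha)$, transposing $u$ and $v$ produces $\hat u\colon X\to G_\alpha E$ and $\hat v\colon Y\to G_\alpha B$, and naturality of the adjunction in its two variables turns the commuting square in $\M$ into a commuting square in $\M^C$ with $f$ on the left and $G_\alpha(p)$ on the right. Because $f\in L=\presuper{\lift}{R}$ and $G_\alpha(p)\in R$, this square admits a diagonal filler $\hat s\colon Y\to G_\alpha E$; transposing $\hat s$ back gives $s\colon Y(\alpha)\to E$, and naturality once more shows $s\circ f(\alpha)=u$ and $p\circ s=v$, so $s$ solves the original problem. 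As $p\in R_\ast$ was arbitrary, this gives $f(\alpha)\in\presuper{\lift}{R_\ast}$.

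The step I expect to be the main obstacle is establishing $G_\alpha(p)\in R$: this is where the detection hypothesis genuinely does work, and it rests on two facts that must be handled with care — that the constant diagram $P(p)$ lands in $R$ (a direct application of detection) and that $R$ is closed under the products appearing in $G_\alpha$. The latter is harmless when the hom-sets $C(\beta,\alpha)$ are finite, since then only the finite products guaranteed by the hypotheses on $\M$ are needed; in the general small-category case one must additionally know that $\M$ admits the relevant small products, which holds in all the cofibrantly generated settings of interest here. The remaining work — checking that transposition matches up the lifting data — is a routine naturality verification.
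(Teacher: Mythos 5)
Your proof is correct and takes essentially the same route as the paper: transpose the lifting problem along the adjunction between evaluation at $\alpha$ and the right Kan extension $G_\alpha(A)(\beta)=\prod_{C(\beta,\alpha)}A$, check that the right adjoint carries $R_\ast$ into $R$, and solve the transposed problem using $f\in L$. Your two-step justification that $G_\alpha(p)\in R$ (constant diagram via the detection hypothesis, then closure of $R$ under products, then detection again) is in fact more careful than the paper's one-line appeal to ``$R$ is closed under arbitrary products,'' and your remark about needing the relevant small products in $\M$ is a legitimate point that applies equally to the paper's own argument.
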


\begin{proof}
We start with an adjoint pair of functors for $\alpha \in C$:
$$F\colon \M^C \rightleftarrows \M \colon G$$
where $F(X)=X(\alpha)$ for $X\in \M^C$ and $G(y)(\beta)=\prod\limits_{\beta \rightarrow \alpha}y$ for $y\in \M$ and $\beta \in C.$  Suppose now that $f\in L$ and there is a commutative diagram:

\begin{equation*}
\begin{tikzpicture}[baseline=(current bounding box.center)]
\matrix (m) 
[matrix of math nodes, row sep=3em, column sep=3em, text height=1.5ex, text depth=0.25ex]
{
X(\alpha) & w \\
Y(\alpha) & z \\ 
};
\path[->, thick, font=\scriptsize]
(m-1-1)
edge  (m-1-2)
edge node[anchor=east] {$f(\alpha)$} (m-2-1)
(m-1-2)
edge node[anchor=west] {$g$} (m-2-2)
(m-2-1) 
edge  (m-2-2)
; 
\end{tikzpicture}
\end{equation*}

then by the adjoint pair $(F,G)$ we get a diagram;

\begin{equation*}
\begin{tikzpicture}[baseline=(current bounding box.center)]
\matrix (m) 
[matrix of math nodes, row sep=3em, column sep=3em, text height=1.5ex, text depth=0.25ex]
{
X & G(w)
\\
Y & G(z) \\ 
};
\path[->, thick, font=\scriptsize]
(m-1-1)
edge  (m-1-2)
edge node[anchor=east] {$f$} (m-2-1)
(m-1-2)
edge node[anchor=west] {$G(g)$} (m-2-2)
(m-2-1) 
edge  (m-2-2)
; 
\end{tikzpicture}
\end{equation*}

Now if we suppose $g \in R_\ast$ then $G(g)\in R$ since $R$ is closed under arbitrary products, so in particular, there exists a map $\theta \colon Y \rightarrow G(w)$ that gives a lift in the second diagram above and therefore we have a map (using the adjoint pair of functors) $\varepsilon_w \circ F(\theta) \colon Y(\alpha)\rightarrow w$ so that a lift exists in the first diagram and $f(\alpha)\in \presuper{\lift}{R_\ast}.$

\end{proof}

We can now prove Theorem \ref{DIAGDOWN}.

\begin{proof}[of Theorem \ref{DIAGDOWN}]
Let $\C, \F$ and $\W$ be the cofibrations, fibrations and weak equivalences described in Theorem \ref{DIAGDOWN}.  We start by proving that $\M$ has a model category structure by showing that the pairs $(\C, \F \cap \W)$ and $(\C\cap \W, \F)$ are weak factorization systems and that $\W$ satisfies the 2 out of 3 property.  For the 2 out of 3 property, consider the composition of two maps $f,g \in \M.$  We have the pointed functor $P\colon \M \rightarrow \M^C.$  Note that $F(f), F(g)$ and $F(fg)$ are weak equivalences if and only if $f,g$ and $fg$ are as well.  From this, it is clear that $\W$ satisfies the 2 out of 3 property.  

For the weak factorization systems, notice that $(\C, \F\cap \W)$ is a weak factorization system since $\C$ is defined to be the maps that have the left lifting property with respect to $\F\cap \W$ and any map $f \in \M$ has an associated pointed map $f \in \M^C$ which can be factored as $p\circ i$ where $i$ is a cofibration and $p$ is an acyclic fibration, but by lemma \ref{LEM}, any component map of $i$ lifts with respect to pointed fibrations i.e. component maps of $i$ are in $\C$ and by definition any component map of $p$  belongs to $\F \cap \W.$  Now, for $(\C\cap \W, \F)$ any diagram of the form 

\begin{equation*}
\begin{tikzpicture}[baseline=(current bounding box.center)]
\matrix (m) 
[matrix of math nodes, row sep=3em, column sep=3em, text height=1.5ex, text depth=0.25ex]
{
 A & B 
\\
C & D \\ 
};
\path[->, thick, font=\scriptsize]
(m-1-1)
edge  (m-1-2)
edge node[anchor=east] {$\psi$} (m-2-1)
(m-1-2)
edge node[anchor=west] {$\varphi$} (m-2-2)
(m-2-1) 
edge  (m-2-2)
; 
\end{tikzpicture}
\end{equation*}
(where $\psi\in \C \cap \W$ and $\varphi \in \F$)
has a lift since by part $v)$ any pointed map  
$f$ of $\M^C$ which lifts with respect to all pointed acyclic fibrations which are also weak equivalences, lifts with respect to all pointed fibrations, but when considering the diagram above as a pointed diagram in $\M^C$ we see that a lift does exist which gives a lift in $\M$ as well.  Also, any map $f \in M$ has a factorization since the associated pointed map in $\M^C$ has a factorization $P(f)=p\circ i$ where $i$ is an acyclic cofibration and $p$ is a fibration.  $i$ is a cofibration which gives that component maps are in $\C$ by Lemma \ref{LEM} and $i$ is a weak equivalence so component maps are also weak equivalences and in $\W.$  We also see that component maps of $p$ are in $\F,$ but this gives that any composition of component maps from $p$ and $i$ form a factorization for $f.$

To finish the proof we need to show that this model category structure on $\M$ gives the same model category structure on $\M^C$ when considering the objectwise fibration model category structure.  This is a straight forward check of the defined weak equivalences and fibrations, so we leave the remainder of the proof to the reader. 
\end{proof}

This brings us to the intersection delocalization theorem discussed in the beginning of the section.

\begin{theorem}
Let $\M$ be a co complete and complete category and $C$ be a small category.  Given two model category structures on $\M^C$ denoted $\M_1^C$ and $\M_2^C$ which both induce a model category structure on $\M$ (see Theorem \ref{DIAGDOWN}) where the right Bousfield delocalization model category $\M_1^C \cap \M_2^C$ exists and with the assumption that $\M_1^C \cap \M_2^C$ satisfies part $v)$ of Theorem \ref{DIAGDOWN} then there exists a model category structure $\M_3$ on $\M$ whose associated objectwise fibration model category structure on $\M^C$ agrees with $\M_1^C \cap \M_2^C$ and furthermore $\M_3$ and $\M_1 \cap \M_2$ agree.
\end{theorem}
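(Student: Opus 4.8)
The plan is to obtain $\M_3$ by feeding the model category $\M_1^C \cap \M_2^C$ into Theorem \ref{DIAGDOWN}, and then to match the three resulting classes of maps on $\M$ against the classes that Definition \ref{INT} prescribes for $\M_1 \cap \M_2$. Two preliminary observations organize everything. First, because $\M_1^C \cap \M_2^C$ is assumed to exist, Definition \ref{INT} forces the fibrations of $\M_1^C$ and of $\M_2^C$ to coincide; passing to component maps (Theorem \ref{DIAGDOWN}), the induced structures $\M_1$ and $\M_2$ on $\M$ then share the same fibrations, so $\M_1 \cap \M_2$ is at least well defined as a collection of classes. Second, by the ``furthermore'' clause of Theorem \ref{DIAGDOWN} each $\M_i^C$ is the objectwise fibration structure of $\M_i$, so both its fibrations and its weak equivalences are objectwise; hence the fibrations of $\M_1^C \cap \M_2^C$ (namely $\F_{\M_1^C}=\F_{\M_2^C}$) and its weak equivalences (namely $\W_{\M_1^C}\cap\W_{\M_2^C}$) are again objectwise.

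The core step is to verify that $\M_1^C \cap \M_2^C$ satisfies hypotheses $i)$--$iv)$ of Theorem \ref{DIAGDOWN}, since $v)$ is given. Conditions $i)$ and $ii)$ are immediate from the objectwise description: if every component of a map $f$ is a component of a fibration (resp.\ weak equivalence) of $\M_1^C \cap \M_2^C$, then each $f(\alpha)$ is an $\M_1$-fibration (resp.\ lies in $\W_{\M_1}\cap\W_{\M_2}$), whence $f$ is itself a fibration (resp.\ weak equivalence) of the intersection. Condition $iii)$ I would deduce from the fact that $\M_1^C$ and $\M_2^C$ each satisfy $iii)$: a pointed map lifting against all pointed fibrations of $\M_1^C \cap \M_2^C$ lifts against all pointed fibrations of $\M_1^C$ and of $\M_2^C$ (these three classes coincide), so it is both an $\M_1^C$- and an $\M_2^C$-weak equivalence, hence a weak equivalence of the intersection. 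For the first half of $iv)$ the class $\F_*$ is literally the same as for $\M_1^C$, so the closure $(\presuper{\lift}{(\F_*)})^{\lift}=\F_*$ transfers verbatim.

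Granting $i)$--$v)$, Theorem \ref{DIAGDOWN} then produces a model structure $\M_3$ on $\M$ together with the asserted agreement of its objectwise fibration structure with $\M_1^C \cap \M_2^C$. Computing the classes of $\M_3$ directly from Theorem \ref{DIAGDOWN} gives fibrations $=$ component maps of $(\M_1^C \cap \M_2^C)$-fibrations $=\F_{\M_1}=\F_{\M_2}$, weak equivalences $=$ component maps of $(\M_1^C \cap \M_2^C)$-weak equivalences $=\W_{\M_1}\cap\W_{\M_2}$, and cofibrations $=\presuper{\lift}{(\F_{\M_1}\cap\W_{\M_1}\cap\W_{\M_2})}$. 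Comparing with Definition \ref{INT} shows these agree term by term with the classes of $\M_1 \cap \M_2$; hence $\M_3=\M_1\cap\M_2$, and in particular $\M_1\cap\M_2$ is a genuine model structure.

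The step I expect to be the main obstacle is the acyclic-fibration half of condition $iv)$: showing $(\presuper{\lift}{(\F_*\cap\W_*)})^{\lift}=\F_*\cap\W_*$ in the pointed subcategory for the intersected structure. Unlike $\F_*$, the class $\F_*\cap\W_*$ of pointed acyclic fibrations of $\M_1^C \cap \M_2^C$ is genuinely new—it consists of the pointed maps $P(p)$ with $p$ an $\M_1$-fibration that is simultaneously an $\M_1$- and an $\M_2$-weak equivalence—and is not the acyclic-fibration class of either $\M_i^C$. The trouble is that the weak factorization system $(\C,\F\cap\W)$ coming from the existing model structure on $\M^C$ need not produce pointed factors, so the closure property cannot simply be restricted from $\M^C$ to the pointed subcategory. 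I would resolve this as in the proof of Theorem \ref{DIAGDOWN}, using Lemma \ref{LEM} together with the evaluation adjunction $F\dashv G$ (whose products exist since $\M$ is complete) to promote componentwise lifts to honest lifts, thereby exhibiting $\F_*\cap\W_*$ as the right class of a weak factorization system in the pointed subcategory; this is the one place where the cocompleteness and completeness of $\M$, and the full force of $\M_1^C \cap \M_2^C$ being an honest model category, are really used.
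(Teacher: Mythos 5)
Your proposal follows exactly the route the paper takes: the paper's entire proof is the single remark that one checks the hypotheses of Theorem \ref{DIAGDOWN} for $\M_1^C \cap \M_2^C$ and then reads off that the resulting classes of maps coincide with those of $\M_1 \cap \M_2$. You carry out that check in more detail than the paper does (correctly isolating the acyclic-fibration half of condition $iv)$ as the only nontrivial point), so the proposal is correct and essentially identical in approach.
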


The proof of this theorem is a straight forward check that $\M_1^C \cap \M_2^C$ satisfies the assumptions of Theorem \ref{DIAGDOWN}.  Once this is done, it is clear that $\M_3$ and $\M_1 \cap \M_2$ agree by the defined classes of maps.

\section{\bf Homotopy invariance for Hochschild cohomology}
\label{HOCH}

Although it was the original motivation for this work, in this section we show that a right intersected delocalization does not seem to fit when trying to determine whether higher order Hochschild cohomology is homotopy invariant.  Recall that we would like to consider the category of pointed simplicial sets where weak equivalences are maps which are traditional weak equivalences of pointed simplicial sets, but which also preserve the types of modules which can be used for each simplicial set.

By using a right Bousfield delocalization of the traditional model category structure on pointed simplicial sets, we see that the fibrations would remain the same, but this also implies that the acyclic cofibrations would remain the same.  This is problematic because one could easily come up with maps which would still be considered weak equivalences although the modules between the two simplicial sets differ.  For an example, consider the smallest simplicial set for $S^1$ (with one non-degenerate 0-simplex and one non-degenerate 1-simplex) as the first simplicial set.  For the second simplicial set consider the same simplicial decomposition of $S^1,$ but in addition we will add a 1-simplex whose 0th face is identified with the original 0-simplex.  There is a natural inclusion of pointed simplicial sets, which is both a weak equivalence of simplicial sets and a cofibration, but the coefficient modules do not agree.  

\section*{Acknowledgment}
I would like to thank Andrew Salch, Daniel Isaksen, Bernard Badzioch and Matthew Sartwell for many conversations about this work.  I would also like to thank my wife for her continued support.

%\bibliography{delocalization}
%\bibliographystyle{plain}

\end{document}